\newtheorem{theorem}{Theorem}
\renewcommand{\geq}{\geqslant}
\renewcommand{\leq}{\leqslant}
\newcommand{\R}{\mathbb{R}}
\newcommand{\eps}{\varepsilon}
\newcommand{\la}{\langle}
\newcommand{\ra}{\rangle}
\newcommand{\PP}{\mathbb{P}}
\renewcommand{\a}{\mathbf{a}}
\newcommand{\vol}{\mathrm{Vol}}
\newcommand{\0}{\mathbf{0}}
\newcommand{\conv}{\mathrm{conv} \, }
\newcommand{\x}{\mathbf{x}}
\newcommand{\dd}{\, \mathrm{d}}
\newcommand{\erf}{\mathrm{erf}}
\numberwithin{equation}{section}
\title{Critical central sections of the cube}
\author{Gergely Ambrus}
\address{}
\begin{document}

\thanks{Research of the author was supported by NKFIH grant KKP-133819 and by the EFOP-3.6.1-16-2016-00008 project, which in turn has been supported by the European Union, co-financed by the European Social Fund.
 }

\keywords{Cube sections, volume, variational methods, Fourier analytic tools.  }

\subjclass[2020]{ 52A40, 52A38, 49Q20}

\begin{abstract}
We study the volume of central hyperplane sections of the cube. Using Fourier analytic and variational methods, we retrieve a geometric condition characterizing critical sections which, by entirely different methods, was recently proven by Ivanov and Tsiutsiurupa. Using this characterization result, we prove that critical central hyperplane sections in the 3-dimensional case are all diagonal to a (possibly lower dimensional) face of the cube, while in the 4-dimensional case, they are either diagonal to a face, or, up to permuting the coordinates and sign changes, perpendicular to the vector $(1,1,2,2)$. This shows the existence of non-diagonal critical central sections.
\end{abstract}

\maketitle

\section{History and results}
Let $Q_n = [ -1, 1 ]^n$ denote the standard $n$-dimensional cube, which is  the unit ball of the $\ell_\infty$-norm on $\R^n$. It is a classical question to study the $(n-1)$-dimensional volume of sections of $Q_n$ with hyperplanes containing $\0$. Determining which central sections are of minimal and maximal volume had been at the center of attention for over a century, as this question was already rooted in the works of Laplace~\cite{La1812} and Pólya~\cite{Po13}. Yet, it was not before the 1970's that Hadwiger~\cite{Ha71} proved that {\em minimal} hyperplane sections are parallel to facets of $Q_n$ and thus they are of volume $2^{n-1}$. A few years later, Hensley~\cite{He79} independently re-proved this result using probabilistic methods and also gave an upper bound on the volume of central hyperplane sections. In his celebrated work, Ball~\cite{Ba86} proved that {\em maximal} hyperplane sections are orthogonal to a main diagonal of a 2-dimensional face of $Q_n$, hence, their volume is $\sqrt{2} \cdot 2^{n-1}$. Extensions of these estimates to {\em lower dimensional sections} were proven by Vaaler~\cite{Va79},  Ball~\cite{Ba89}, and Ivanov and Tsiutsiurupa \cite{IT21}, while alternate proofs were given by Nazarov and Podkorytov~\cite{NP00}, and Akopyan, Hubard and Karasev~\cite{AHK19}. Non-central sections were estimated by Moody, Stone, Zach and Zvavitch~\cite{MSZZ13} and König~\cite{K21}. Further extensions to {\em unit balls of $\ell_p$-norms} were studied by Meyer and Pajor~\cite{MP88}, Koldobsky~\cite{K98}, Eskenazis~\cite{E19}, and Liu and Tkocz~\cite{LT20}. Generalizations to {\em Gaussian measures} were given by Zvavitch~\cite{Z08}, Barthe, Guédon, Mendelson and Naor~\cite{BGMN05} and Koldobsky and König \cite{KK12}, while the analogous question for perimeters was studied recently in \cite{KK19}. Aliev~\cite{Al21} determined maximal sections with respect to a certain normalization.  The closely related problem of estimating volumes of {\em central slabs} was discussed by Barthe and Koldobsky~\cite{BK03} and by König and Koldobsky~\cite{KK11}.  For related results and further discussions, see~\cite{Ko05} and~\cite{KY08}.

Determining minimal and maximal sections is only the tip of the iceberg when studying the behaviour of $\vol_{n-1}(Q_n \cap \a^\perp)$ as a function of $\a \in S^{n-1}$ (the unit sphere in $\R^n$). One is tempted to believe that local extremizers are perpendicular to a main diagonal of a $k$-dimensional face of $Q_n$ -- we are going to call these {\em $k$-diagonal directions} and the corresponding hyperplane sections as {\em $k$-diagonal sections.} Hence, up to permuting the coordinates and changing signs, $k$-diagonal directions are of the form $(\frac 1{ \sqrt{k}}, \ldots,\frac 1{ \sqrt{k}}, 0, \ldots, 0)$ where the number of non-zero coordinates is $k$. Without specifying $k$, we will also simply refer to {\em diagonal directions} and {\em diagonal sections}.

As a first step in the analysis, it is essential to compare the volumes of diagonal sections. By probabilistic methods, the Central Limit Theorem implies that the volume of $k$-diagonal sections for $k \approx n$ is about $\sqrt{6 / \pi} \cdot 2^{n-1}$, which is slightly less than the volume of 2-diagonal sections, that is $\sqrt{2} \cdot 2^{n-1}$. In their recent work, Bartha, Fodor, and González Merino~ \cite{BFGM21} proved that for each fixed $n \geq 3$ the volume of $k$-diagonal sections form a strictly increasing sequence for $k \geq 3$, which is sandwiched between the values taken at $k=1$ and $k=2$. Considering sections of {\em arbitrary dimension},  by using geometric methods, analyzing local modifications, and studying the relationship with frames, Ivanov and Tsiutsiurupa \cite{IT21} established necessary conditions for sections in order to have locally maximal volume.

In this note we apply Fourier analytic methods to study {\em critical} central hyperplane sections: these are the sections $\a^\perp \cap Q_n$ whose normal vector $\a$ is a critical point on $S^{n-1}$ with respect to the volume of the central section of $Q_n$. Such normal vectors will be referred to as {\em critical directions}. We retrieve the main condition of Ivanov and Tsiutsiurupa \cite{IT21} (see Theorem 1.1, Condition 4 therein) for $(n-1)$-dimensional sections being locally maximal. Yet, the  Fourier analytic approach yields a more transparent proof for the statement. Our argument is reminiscent of the work of Koldobsky and König~\cite{KK11} -- for that connection, see the remark following the proof of Theorem~\ref{thm1}.

Below and later on, $S_i$ denotes the facet of $Q_n$ corresponding to the $i$th coordinate being~1. That is, $S_i = \{ \x = (x_1, \ldots, x_n) \in Q_n : \ x_i = 1\}$. The vectors $\a, \x \in \R^n$ will always have coordinates $\a = (a_1, \ldots, a_n)$ and $\x = (x_1, \ldots, x_n)$.  As mentioned before, $S^{n-1}$ is the unit sphere of $\R^n$.

\begin{theorem}\label{thm1}
The unit vector $\a = (a_1, \ldots, a_n) \in S^{n-1}$ is a critical direction with respect to the central section volume function $\vol_{n-1} (Q_n \cap \a^\perp)$ if and only if it is parallel to a main diagonal of a 2-dimensional face of $Q_n$, or there exists some $\mu >0$ for which
\begin{equation}\label{thm1eq}
  \vol_{n-1}(\conv(\0 \cup ( S_k \cap \a^{\perp})) = \mu (1 - a_k^2)
\end{equation}
holds true for each $k = 1, \ldots, n$.
\end{theorem}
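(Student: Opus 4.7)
The plan is to compute the gradient on $S^{n-1}$ of the central section volume function
\begin{equation*}
V(\a) := \vol_{n-1}(Q_n \cap \a^\perp) = \frac{2^{n-1}}{\pi}\int_{-\infty}^{\infty} \prod_{k=1}^n \frac{\sin(a_k t)}{a_k t}\, dt,
\end{equation*}
where the second equality is the standard Fourier representation, obtained by identifying $V(\a)$ with $2^n$ times the density at zero of $\la\a,X\ra$ when $X$ is uniform on $Q_n$. This representation extends $V$ to $\R^n \setminus \{\0\}$ as a function homogeneous of degree $-1$, so Euler's identity yields $\sum_k a_k \partial_k V = -V(\a)$.

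Differentiating under the integral sign, multiplying by $a_k$, and simplifying produces
\begin{equation*}
a_k \frac{\partial V}{\partial a_k} = \frac{2^{n-1}}{\pi}\int_{-\infty}^{\infty} \cos(a_k t)\prod_{j \neq k}\frac{\sin(a_j t)}{a_j t}\, dt - V(\a).
\end{equation*}
By Fourier inversion the remaining integral equals $2\pi g(a_k)$, where $g$ is the probability density of $\sum_{j \neq k} a_j U_j$ for independent uniforms $U_j$ on $[-1,1]$. A coarea computation expresses $g(a_k)$ as $\vol_{n-2}(S_k \cap \a^\perp)/(2^{n-1}\sqrt{1-a_k^2})$, and the cone volume formula applied to $\conv(\0 \cup (S_k \cap \a^\perp))$, whose apex sits at distance $1/\sqrt{1-a_k^2}$ from the affine hull of its base, converts this into the clean identity
\begin{equation*}
a_k \frac{\partial V}{\partial a_k} = 2(n-1) V_k - V(\a), \qquad V_k := \vol_{n-1}\bigl(\conv(\0 \cup (S_k \cap \a^\perp))\bigr).
\end{equation*}
The rest is essentially formal: at a critical point the Lagrange condition $\partial_k V = 2\lambda a_k$ combined with Euler's identity forces $\lambda = -V(\a)/2$, whence $V_k = \mu(1-a_k^2)$ with $\mu = V(\a)/(2(n-1)) > 0$; the converse implication is obtained by running the chain of identities backwards to recover $\partial_k V = -V(\a) a_k$, which is parallel to $\a$.

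The main obstacle is justifying the differentiation under the integral sign, which requires the derivative integrand to be absolutely integrable. A decay analysis of $\prod_{j \neq k}\sin(a_j t)/(a_j t)$ shows this holds precisely when $\a$ has at least three nonzero coordinates, and the degenerate cases must be handled by hand. If $\a = \pm e_k$ (a $1$-diagonal), direct computation confirms that $V_k = \mu(1-a_k^2)$ holds with $\mu = 2^{n-2}/(n-1)$ and that $\a$ is critical (it is the local minimum found by Hadwiger). If $\a$ has exactly two nonzero coordinates, then up to symmetry $\a = (\cos\theta, \sin\theta, 0, \ldots, 0)$, and the explicit expression $V(\a) = 2^{n-1}/\max(\cos\theta, \sin\theta)$ shows that $V$ is smooth and non-critical for $\theta \in (0, \pi/2)\setminus\{\pi/4\}$; at $\theta = \pi/4$ the function $V$ has a corner (so the Fourier derivation legitimately breaks) but $\a$ is still critical in the generalized sense as a global maximum by Ball's theorem, while the identity $V_k = \mu(1-a_k^2)$ can be checked by hand to fail there. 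This is exactly what the separate "2-diagonal" clause in the statement records.
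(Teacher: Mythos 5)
Your proposal is correct and follows essentially the same route as the paper: the Pólya/Fourier representation, Lagrange multipliers, identification of the multiplier as $-\sigma(\a)$, and the reinterpretation of the resulting integral as the cone volume $\vol_{n-1}(\conv(\0\cup(S_k\cap\a^\perp)))$ via Fourier inversion and the distance $1/\sqrt{1-a_k^2}$, with the $1$- and $2$-coordinate degeneracies handled separately. Your use of Euler's identity for the degree $-1$ homogeneity of $V$ is just a repackaging of the paper's integration by parts $\int t\varphi'(t)\,\dd t=-\int\varphi(t)\,\dd t$, and your explicit attention to the integrability needed for Leibniz's rule and to the corner at the $2$-diagonal is, if anything, slightly more careful than the original.
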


The condition guaranteed by Theorem~\ref{thm1} may be used  to calculate critical directions. With the aid of probabilistic methods, we demonstrate this for $n \leq 4$. Based on computational evidence, it has been widely believed that all critical directions are diagonal. We confirm this conjecture in dimension 3, but disprove it in dimension 4. 

\begin{theorem}\label{thm2}
For $n = 2,3$, critical central hyperplane sections of $Q_n$ are exactly the diagonal sections.
\end{theorem}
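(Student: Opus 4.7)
The plan is to invoke Theorem~\ref{thm1} and reduce to a purely algebraic question. Every critical direction is either parallel to a main diagonal of a $2$-face of $Q_n$ (and then is $2$-diagonal by definition, hence diagonal) or satisfies \eqref{thm1eq} for some $\mu > 0$. It therefore suffices to show that in dimensions $n \leq 3$ the latter alternative produces only diagonal directions. The case $n = 2$ is a one-line check: $\vol_{0}(\conv(\0 \cup (S_k \cap \a^\perp)))$ equals $1/|a_{3-k}|$ when $a_{3-k} \neq 0$ and vanishes otherwise, so \eqref{thm1eq} forces either $|a_1| = |a_2|$ (a $2$-diagonal) or one coordinate to vanish (a $1$-diagonal). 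I will therefore focus on $n = 3$.

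I will organize the $n = 3$ argument by the number of vanishing coordinates of $\a = (a_1, a_2, a_3) \in S^2$. Two zeros give a $1$-diagonal and are done. Exactly one zero, say $a_1 = 0$, will force $a_2 = a_3$ (otherwise one of $S_2 \cap \a^\perp$, $S_3 \cap \a^\perp$ becomes empty while $\mu(1-a_k^2) > 0$), and I will check that even in the symmetric case $a_2 = a_3 = 1/\sqrt{2}$ the three instances of \eqref{thm1eq} produce different values of $\mu$; so the only critical direction in this sub-case, $(0, 1, 1)/\sqrt{2}$, is supplied not by \eqref{thm1eq} but by the first alternative of Theorem~\ref{thm1}. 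In the main sub-case $a_1, a_2, a_3 > 0$, the triangle inequalities $a_k < a_i + a_j$ are necessary (else $V_k := \vol_2(\conv(\0 \cup (S_k \cap \a^\perp))) = 0$ while $\mu(1-a_k^2) > 0$), and a direct cross-product computation identifies $\conv(\0 \cup (S_k \cap \a^\perp))$ as a triangle of area
\begin{equation*}
V_k = \frac{a_i + a_j - a_k}{2\, a_i a_j}, \qquad \{i,j,k\} = \{1,2,3\}.
\end{equation*}

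The conceptual heart of the argument is that substituting this formula into \eqref{thm1eq} and using $a_i a_j = s_3/a_k$, $1 - a_k^2 = a_i^2 + a_j^2$ with $s_1 := a_1 + a_2 + a_3$, $s_3 := a_1 a_2 a_3$, collapses the three equations into a single quadratic relation with coefficients independent of $k$,
\begin{equation*}
(2\mu s_3 - 2)\, a_k^2 + s_1\, a_k - 2\mu s_3 = 0 \qquad (k = 1, 2, 3).
\end{equation*}
Thus $a_1, a_2, a_3$ are roots of a single quadratic (or, in the degenerate case $\mu s_3 = 1$, of a single linear relation), and can assume at most two distinct values. To close, I will rule out the two-value case by setting $a_1 = a_2 = a$, $a_3 = b$ with $2a^2 + b^2 = 1$ and feeding back into \eqref{thm1eq}: the system reduces to $2a^3 = (a^2 + b^2)(2a - b)$, which expands to $b(a - b)^2 = 0$, and since $b > 0$ forces $a = b$, giving $\a = (1,1,1)/\sqrt{3}$, the $3$-diagonal. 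The principal obstacle is conceptual rather than computational: finding the right rewriting of \eqref{thm1eq} so that the three coordinates appear symmetrically as roots of a single polynomial. Once that observation is in place the remaining work is routine, with the only mild nuisance being the book-keeping of degenerate configurations (vanishing coordinates, failed triangle inequalities, and the linear degeneracy $\mu s_3 = 1$).
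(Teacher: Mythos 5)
Your proposal is correct and follows the same overall strategy as the paper — reduce via Theorem~\ref{thm1} to a polynomial system in $a_1,a_2,a_3$ coming from the explicit piecewise-linear formula for the relevant section of the square, and then case-analyze according to how many coordinates coincide — but the algebraic execution differs in a pleasant way. The paper compares coordinates pairwise through the density identity \eqref{a1a2}, obtains the cubic relation \eqref{a1a2_2} for each pair, and sums the three cyclic versions to factor out $(a_1+a_2+a_3)$; you instead substitute the triangle-area formula $V_k=(a_i+a_j-a_k)/(2a_ia_j)$ (which is exactly the paper's $s_{\widetilde\a_k}(a_k)/(2|\widetilde\a_k|)$, valid once the triangle inequality is enforced) directly into \eqref{thm1eq} and observe that, after clearing denominators via $a_ia_j=s_3/a_k$ and $1-a_k^2=a_i^2+a_j^2$, all three coordinates are roots of the single quadratic $(2\mu s_3-2)X^2+s_1X-2\mu s_3=0$, so at most two distinct values can occur. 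The two devices are interchangeable here, but your ``common quadratic'' packaging is slightly more economical and makes the impossibility of three distinct coordinates immediate, while the paper's summation trick is the one that generalizes to the $n=4$ analysis. Your handling of the degenerate configurations (zero coordinates, failed triangle inequalities, the linear case $\mu s_3=1$) and of the two-value case via $b(a-b)^2=0$ is complete and matches the paper's conclusion $a_1(1-2a_2^2-a_1a_2)=0$; the only cosmetic slip is writing $\vol_0$ instead of $\vol_1$ for the length of the segment $\conv(\0\cup(S_k\cap\a^\perp))$ in the $n=2$ check.
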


\begin{theorem}\label{thm3}
For $n = 4$, critical central hyperplane sections of $Q_4$ are either diagonal, or their normal vector is $\frac 1 {\sqrt{10}}(1,1,2,2)$ up to permuting coordinates and changing signs.
\end{theorem}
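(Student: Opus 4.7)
My plan is to apply Theorem~\ref{thm1}, translate the resulting geometric condition into a probabilistic one, and then analyze critical directions according to the coincidence pattern of the coordinates $a_i$. By the symmetries of $Q_4$ (coordinate permutations and sign flips) I assume $1 \geq a_1 \geq a_2 \geq a_3 \geq a_4 \geq 0$. If $a_4 = 0$ then $\R e_4 \subset \a^\perp$, so $Q_4 \cap \a^\perp$ is the prism $(Q_3 \cap \a'^\perp) \times [-1,1]$ with $\a' = (a_1, a_2, a_3)$; invariance of the volume functional under $a_4 \mapsto -a_4$ makes $\partial_{a_4}$ vanish automatically on $\{a_4 = 0\}$, so $\a$ is critical on $S^3$ precisely when $\a'$ is critical on $S^2$. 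Theorem~\ref{thm2} then forces $\a$ to be diagonal, and I henceforth assume $a_4 > 0$.

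If $\a$ is $2$-diagonal we are done. Otherwise Theorem~\ref{thm1} supplies $\mu > 0$ such that $\vol_3(\conv(\0 \cup (S_k \cap \a^\perp))) = \mu(1-a_k^2)$ for $k = 1,2,3,4$. The set $S_k \cap \a^\perp$ lies in the affine $2$-plane $\{x_k=1\} \cap \a^\perp$, whose distance from $\0$ equals $(1-a_k^2)^{-1/2}$ by Lagrange multipliers, so the above cone volume equals $\tfrac{1}{3}(1-a_k^2)^{-1/2} \vol_2(S_k \cap \a^\perp)$, and the criticality condition simplifies to $\vol_2(S_k \cap \a^\perp) = 3\mu(1-a_k^2)^{3/2}$. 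Letting $X_1, \ldots, X_4$ be i.i.d.\ uniform on $[-1,1]$ and $Y_k = \sum_{i \neq k} a_i X_i$, the slice $S_k \cap \a^\perp$ (in the coordinates of $S_k$) is the level set $\{y \in [-1,1]^3 : \a^{(k)} \cdot y = -a_k\}$ and has $2$-volume $8\sqrt{1-a_k^2} f_{Y_k}(a_k)$. Hence the condition reduces to
\begin{equation}\label{probcond}
f_{Y_k}(a_k) = C\,(1 - a_k^2), \qquad k = 1, 2, 3, 4,
\end{equation}
for a common constant $C > 0$, where each $f_{Y_k}$ is a piecewise quadratic density (as $Y_k$ is a sum of three weighted independent uniforms).

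The key case is the pattern $a_1 = a_2 =: a$, $a_3 = a_4 =: b$, with $a \geq b > 0$ and $2a^2 + 2b^2 = 1$. Here $Y_1 = aX_2 + b(X_3+X_4)$ and $Y_3 = a(X_1+X_2) + bX_4$, and a short convolution computation (using the triangular density of $X_i + X_j$ on $[-2,2]$, with integration limits capped at $\pm 2$ when $a \geq b$) gives
\[f_{Y_1}(a) = \frac{1}{4a}, \qquad f_{Y_3}(b) = \frac{2a - b}{4a^2}.\]
Substituting into \eqref{probcond} and eliminating $C$ yields $a(1-b^2) = (1-a^2)(2a-b)$; using $b^2 = \tfrac{1}{2} - a^2$ this collapses to $b\bigl(3ab - (1 - a^2)\bigr) = 0$, i.e.\ $3ab = 1 - a^2$, and squaring produces the biquadratic $20a^4 - 13a^2 + 2 = 0$, with roots $a^2 = 1/4$ (the degenerate $4$-diagonal $a = b = 1/2$) and $a^2 = 2/5$ (giving $(a,b) = (2/\sqrt{10}, 1/\sqrt{10})$, i.e.\ the claimed direction $(2,2,1,1)/\sqrt{10}$).

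It remains to dispose of the other coincidence patterns: three of the $a_i$ equal and one different (two sub-cases), exactly one pair equal with the remaining two coordinates distinct (three sub-cases), and all four distinct. Each yields its own polynomial system via \eqref{probcond}, with $f_{Y_k}(a_k)$ described piecewise quadratically according to the sign pattern of the breakpoints $a_k - \sum_{i \neq k} \eps_i a_i$, $\eps \in \{-1,1\}^3$. The main obstacle is to identify the correct combinatorial region for each $f_{Y_k}(a_k)$ and then verify that the resulting system admits no non-diagonal solutions. As an illustration, for the pattern $a_1 = a_2 = a_3 = a > a_4 = c$ the same method gives $f_{Y_1}(a) = \tfrac{1}{4a}$ and $f_{Y_4}(c) = (3a^2 - c^2)/(8a^3)$, leading via $c^2 = 1 - 3a^2$ to $(1-3a^2)(4a^2 - 1) = 0$, whose only admissible roots correspond to the $3$-diagonal ($c = 0$, excluded by $a_4 > 0$) and the $4$-diagonal ($a = c = 1/2$). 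I expect that analogous factorisations rule out non-diagonal solutions in each remaining sub-case, so that the full argument reduces to the laborious but systematic bookkeeping of all sign patterns in \eqref{probcond}.
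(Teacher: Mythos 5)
Your reduction of Theorem~\ref{thm1} to the probabilistic condition $f_{Y_k}(a_k)=C\,(1-a_k^2)$ is correct and is in fact the same engine the paper runs on: taking the ratio of this condition for two indices is exactly the paper's pairwise identity \eqref{a1a2}. Your handling of the boundary case $a_4=0$, your convolution computations in the pattern $a_1=a_2=a\geq a_3=a_4=b$ (yielding $f_{Y_1}(a)=\tfrac{1}{4a}$, $f_{Y_3}(b)=\tfrac{2a-b}{4a^2}$ and the biquadratic $20a^4-13a^2+2=0$), and your illustrative sub-case $a_1=a_2=a_3>a_4$ all check out; note also that within the pattern $a_1=a_2,\ a_3=a_4$ the four conditions genuinely collapse to one by symmetry, so your single equation does certify that $(2,2,1,1)/\sqrt{10}$ is critical.

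The gap is that the cases you defer with ``I expect that analogous factorisations rule out non-diagonal solutions'' are not a routine appendix --- they are the bulk of the proof. You cannot simply enumerate coincidence patterns and solve each in isolation: for the patterns ``all four distinct'' and ``exactly one pair equal,'' eliminating $C$ leaves a system of three independent equations whose form depends on the sign pattern of the breakpoints (essentially the signs of $b_1+b_2-b_3-b_4$ and $b_1+b_4-b_2-b_3$), and the main work is showing these systems are inconsistent. The paper does this by splitting into four combinatorial regimes for the piecewise densities, then exploiting algebraic cancellations: in the regime $a_1+a_4\leq a_2+a_3$ summing three equations yields $(a_2-a_3)^2+(a_2-a_4)^2+(a_3-a_4)^2=-2a_1^2$, killing the all-distinct case, while in the regime $a_1+a_4\geq a_2+a_3$ differences of the Case-D equations factor as $(a_1+a_2+a_3-a_4)^2(a_i-a_j)(a_4(a_i+a_j+a_4)-1)$, where the first factor is nonzero by \eqref{absa}. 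Even after the pattern is pinned down, two of the residual polynomial systems (the ``one pair equal, other two distinct'' pattern and the pattern $a_1=a_2=a_3$ with the constraint $a_1>1/\sqrt{12}$ from \eqref{absa}) are resolved in the paper only with computer algebra. None of these steps appears in your write-up, so as it stands the proposal establishes that $(1,1,2,2)/\sqrt{10}$ \emph{is} a non-diagonal critical direction but does not prove that it is the \emph{only} one.
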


\section{A geometric characterization of critical sections}

Theorem~\ref{thm1} will be proved using Fourier analytic tools and variational methods -- see~\cite{Ko05} for detailed theoretical background. Naturally, the following classical formula, dating back to Pólya~\cite{Po13}, lies at the core of the arguments: For any unit vector $\a  \in S^{n-1}$,
\begin{equation}\label{eq_laplace}
\vol_{n-1}(Q_n \cap \a^\perp) = \frac {2^{n-1}}{\pi} \int_{-\infty}^\infty \prod_{i=1}^n \frac {\sin a_i t}{a_i t} \dd t.
\end{equation}
Here and later on, $\frac{\sin(0)}{ 0}$ is understood to be 1.

We will use the following generalization of \eqref{eq_laplace}. For arbitrary non-zero $\a \in \R^n$, introduce the  {\em parallel section function} $s_\a(.)$ defined on $\R$ as
\begin{equation}\label{def_section}
s_{\a}(r) = \vol_{n-1}(\x \in Q_n: \ \la \x, \a \ra = r).
\end{equation}
Note that $s_{\a}(r)$ is the $(n-1)$-dimensional volume of the hyperplane section of $Q_n$ orthogonal to $\a$ {\em at distance $\frac {r}{ |\a|}$ from the origin}. In particular, $s_{\a}(r)$ is {\em not} invariant under scaling of~$\a$.

Furthermore, introduce  the {\em normalized central section function} $\sigma(.)$ defined on $\R^n \setminus \{ \0 \}$ as
\begin{equation}\label{def_sigma}
\sigma(\a) = \frac{\pi}{2^{n-1}}s_\a(0).
\end{equation}

For a continuous random variable $X$, let $f_X(.)$ denote its density function, $F_X(.)$ its distribution function, and $\varphi_X(.)$ its characteristic function.

Let now $X_1, \ldots, X_n$ be independent random variables, each distributed uniformly on $[-1,1]$.  The joint distribution of $X_1, \ldots, X_n$ induces the normalized Lebesgue measure on~$Q_n$. Accordingly, for arbitrary $\0 \neq \a  \in \R^n$ and  $r \in \R$,
\[
\PP\Big( \big|\sum_{i=1}^n a_i X_i - r\big| \leq \eps \Big) = \frac 1 {2^n} \vol_n \Big( \x \in Q_n: \ \big|\la \x, \a \ra  - r \big|\leq \eps \Big).
\]
Thus, letting $\eps \rightarrow 0$, we deduce by \eqref{def_section} that
\begin{equation}\label{fs}
  f_{\sum_{i=1}^n a_i X_i}(r) = \frac {1}{2^n | \a |} s_\a(r)
\end{equation}
as long as $ f_{\sum_{i=1}^n a_i X_i}(r)$ exists, that is, $\a$ and $r$ are not of the form $ \a = c \cdot \mathbf{e}_j$ and $|r| = |c|$, where $\mathbf{e}_j$ denotes the $j$th standard basis vector, and $c \in \R$.

Excluding the above degenerate cases, we can express $f_{\sum_{i=1}^n a_i X_i}(r)$ using the inverse Fourier transform. As is well known, the characteristic function of $\sum_{i=1}^n a_i X_i$ is given by
\begin{equation}\label{charfn}
\varphi_{\sum_{i=1}^n a_i X_i}(t) = \prod_{i=1}^n \frac{\sin a_i t}{a_i t}.
\end{equation}
Hence, by standard Fourier inversion,
\begin{equation}\label{fsumint}
f_{\sum_{i=1}^n a_i X_i}(r)  = \frac 1 {2 \pi} \int_{-\infty}^\infty \prod_{i=1}^n \frac {\sin a_i t}{a_i t} \cdot  \cos rt \dd t.
\end{equation}
Therefore, by \eqref{fs},
\begin{equation}\label{sint}
  s_\a(r) = \frac{2^{n-1}|\a|}{\pi} \int_{-\infty}^\infty \prod_{i=1}^n \frac {\sin a_i t}{a_i t} \cdot  \cos rt \dd t
\end{equation}
and, by \eqref{def_sigma}, we recover \eqref{eq_laplace}:
\begin{equation}\label{sigmaint}
\sigma(\a) = |\a|\int_{-\infty}^\infty \prod_{i=1}^n \frac {\sin a_i t}{a_i t}  \dd t.
\end{equation}

\begin{proof}[Proof of Theorem~\ref{thm1}]
Assume that $\a  \in S^{n-1}$ is a critical direction. Since the constraint on $\a$ is expressed by the equation $a_1^2 + \ldots + a_n^2 = 1$, and $\sigma(\a)$ is differentiable, the method of Lagrange multipliers implies that there exists a constant $\lambda \in \R$ so that for each $k = 1, \ldots, n$,
\begin{equation}\label{lak}
  \lambda a_k  = \frac{\partial}{\partial a_k} \sigma(\a).
\end{equation}
Referring to \eqref{sigmaint}, the Leibniz integral rule shows that for each $k$ with $a_k \neq 0$,
\begin{equation}\label{pd}
  \frac{\partial}{\partial a_k} \sigma(\a) = \int_{-\infty}^\infty \prod_{ i \neq k} \frac {\sin a_i t}{a_i t} \cdot \Big( \frac{\cos a_k t} {a_k} - \frac{\sin a_k t}{a_k^2 t} \Big) \dd t.
\end{equation}
Therefore, multiplying \eqref{lak} by $a_k$ shows that
\begin{equation}\label{lak2}
  \lambda a_k^2 + \sigma(\a) = \int_{-\infty}^\infty \prod_{ i \neq k} \frac {\sin a_i t}{a_i t} \cdot \cos a_k t \dd t
\end{equation}
whenever $a_k \neq 0$. Since $\frac{\partial}{\partial a_k} \sigma(\a) =0$ if $a_k = 0$, the above equality holds in fact for all values of $k=1, \ldots, n$.

Next, we calculate the value of $\lambda$. To that end, let $\varphi(t) := \varphi_{\sum_{i=1}^n a_i X_i}(t)$.  Then by~\eqref{charfn},
\[
\varphi'(t) = \frac 1 t \cdot \sum_{k=1}^n \Big( \prod_{i \neq k} \frac {\sin a_i t}{a_i t} \cdot \cos a_k t \Big) - \frac n t \cdot  \varphi(t).
 \]
Thus, summing \eqref{lak2} over $k = 1, \ldots, n$ leads to
\begin{align*}
\lambda & = \lambda \sum_{k=1}^n a_k^2\\
&=  - n\, \sigma(\a) + \int_{ -\infty} ^ \infty t \varphi'(t)  + n \varphi(t) \dd t\\
&= \int_{ -\infty} ^ \infty t \varphi'(t) \dd t \\
&= \Big[ t \varphi(t) \Big]_{- \infty}^\infty - \int_{ -\infty} ^ \infty \varphi(t) \dd t \\
& = - \sigma(\a)
\end{align*}
whenever $n \geq 2$.

Introduce $\widetilde\a_k = (a_1, \ldots, a_{k-1}, a_{k+1}, \ldots, a_n) \in \R^{n-1}$. If $\widetilde\a_k  = \0$, then $a_k = \pm 1$ and $\a$ is a critical direction corresponding to a minimal central section. Note that \eqref{thm1eq} holds in this case. Otherwise, by \eqref{sint}, equation \eqref{lak2} translates to
\begin{equation}\label{sigmavol}
\sigma(\a) \cdot (1 - a_k^2) = \int_{-\infty}^\infty \prod_{ i \neq k} \frac {\sin a_i t}{a_i t} \cdot \cos a_k t \dd t = \frac{\pi}{2^{n-2}\sqrt{1 - a_k^2}}\, s_{\widetilde\a_k}(a_k)
\end{equation}
unless only one coordinate of $\widetilde\a_k$ is non-zero, and its absolute value equals that of $a_k$. In this case, the critical direction is, up to permutations and sign changes of the coordinates, $\a = (\frac 1 {\sqrt{2}},\frac 1 {\sqrt{2}}, 0, \ldots, 0 )$ -- that is, $a$ is parallel to a main diagonal of a 2-dimensional face of~$Q_n$. These are indeed critical directions, as they correspond to the maximal central sections of~$Q_n$ \cite{Ba86}.

From now on, assume that $\a^\perp \cap Q_n$ is not a minimal or a maximal section, and consider the $(n-2)$-dimensional section
\[
S_k \cap \a^{\perp} = \{ \x \in Q_n: \ \la \x, \a \ra = 0 \textrm{ and }x_k = 1 \}.
\]
Then (see also \cite{KK19})
\begin{equation}\label{volska}
  \vol_{n-2} (S_k \cap \a^{\perp}) = s_{\widetilde\a_k}(-a_k) = s_{\widetilde\a_k}(a_k).
\end{equation}
An elementary geometric computation shows that the distance between $\0$ and $S_k \cap \a^{\perp}$ is $\frac {1} {\sqrt{1 - a_k^2}} = 1/ |\widetilde\a_k|$. Therefore, by \eqref{sigmavol} and \eqref{volska},
\begin{equation*}
\begin{split}
\vol_{n-1}(\conv(\0 \cup ( S_k \cap \a^{\perp})) &= \frac {\vol_{n-2} (S_k \cap \a^{\perp})}{(n - 1)|\widetilde\a_k| } = \frac{s_{\widetilde\a_k}(a_k)}{(n - 1)|\widetilde\a_k|} \\ &= \frac{2^{n-2} \sigma(\a)}{(n-1) \pi} \cdot (1 - a_k^2)
\end{split}
\end{equation*}
which shows that \eqref{thm1eq} holds indeed. 

For the reverse direction, it suffices to assume that there exists $\mu >0$ so that  \eqref{thm1eq} is satisfied. Summing over $k = 1, \ldots, n$ we obtain half the volume of the section $\a^\perp \cap Q_n$ (note that we only take the cones spanned by sides corresponding to $x_i =1$). Hence $
\frac 1 2 s_{\a}(0) = (n-1) \mu$. Therefore,  
\[
\mu = \frac{s_{\a}(0)}{2 (n-1)} = \frac{2^{n-2}}{(n-1)\pi} \sigma(\a)
\]
and \eqref{sigmavol} holds for every $k$. This implies \eqref{lak2} with $\lambda = -\sigma(\a)$, which in turn yields \eqref{lak} whenever $a_k \neq 0$. Since \eqref{lak} also holds trivially for $a_k =0$, we obtain that $\nabla(|\a|)$ and $\nabla(\sigma(\a))$ are indeed parallel to each other, hence $\a$ is a critical direction.
\end{proof}

Note that if $\a$ is not a normal direction of a minimal or a maximal section of $Q_n$,  then \eqref{thm1eq} shows that $s_{\widetilde\a_k}(a_k) \neq 0$ for each $k$, which via \eqref{fsumint} and \eqref{sint} translates to the condition
\begin{equation}\label{absa}
|a_k| < \sum_{i \neq k}|a_i|.
\end{equation}

I would like to point out that the above proof is in the same spirit as the one given by König and Koldobsky \cite{KK11} used for determining central slabs of the cube of extremal volume. This connection is not gratuitous: for each $a_k \neq 0$, applying an orthogonal projection of $Q_n \cap \a^\perp$ onto $S_k$ shows that
\[
s_{\a}(0) = \frac 1 {a_k} \vol_{n-1}(\x \in Q_{n-1}: |\la \x, \a_k \ra| \leq a_k).
\]
Yet, the two problems behave quite differently: for the question regarding the volume of  slabs, the 3-dimensional case is already surprisingly complex with a large number of critical sections, while for the present problem, the behaviour of $\sigma(\a)$ is still fairly simple when $n = 4$, as shown by Theorem~\ref{thm3}.

\section{Critical sections for $n=2,3$}

Our goal in this section is to prove that critical central hyperplane sections are all diagonal when $n \leq 3$. That amounts to showing that given a normal vector $\a$ which is critical with respect to $\sigma(\a)$, all of its non-zero coordinates are equal up to sign changes.

We start the proof of Theorem~\ref{thm2} by noting that for $n=2$ the statement follows by an elementary geometric observation: maximal sections are $2$-diagonal, minimal sections are $1$-diagonal, and the length of the central sections changes monotonously between these extrema. Therefore, we restrict our study to the case $n=3$.

Let $\a \in S^{n-1}$ be a critical normal direction. If $a_k = 0$ holds for some $k$, then $\widetilde{\a}_k$ needs to be a critical direction for  $S_k$ as well. Therefore, we may assume that all coordinates of $\a$ are non-zero. Furthermore, by symmetry, we may assume that $a_k > 0$ for each $k \leq n$. Then, our goal is to show that all the coordinates $a_k$ are identical.

We will compare two coordinates, say, $a_1$ and $a_2$. Formulae \eqref{fsumint} and \eqref{sigmavol} yield
\[
\frac{1}{1 - a_1^2}\, f_{\sum_{i\neq 1} a_i X_i} (a_1) = \frac{1}{1 - a_2^2} \, f_{\sum_{i \neq 2} a_i X_i} (a_2).
\]
Since
\[
 f_{\sum_{i\neq 1} a_i X_i} (a_1) = \frac{1}{2 a_2} \, \int_{a_1 - a_2}^{a_1 + a_2} f_{\sum_{i=3}^n a_i X_i}(x) \dd x
\]
and
\[
 f_{\sum_{i\neq 2} a_i X_i} (a_2) = \frac{1}{2 a_1} \, \int_{a_2 - a_1}^{a_2 + a_1} f_{\sum_{i=3}^n a_i X_i}(x) \dd x,
\]
this is equivalent to
\begin{equation}\label{a1a2}
\frac{1 - a_2^2}{a_2} \int_{a_1 - a_2}^{a_1 + a_2} f_{\sum_{i=3}^n a_i X_i}(x) \dd x = \frac{1 - a_1^2}{a_1}  \int_{a_2 - a_1}^{a_2 + a_1} f_{\sum_{i=3}^n a_i X_i}(x) \dd x.
\end{equation}
The above equation must hold true for any pair of coordinates in place of $a_1$ and $a_2$ as well.

\begin{proof}[Proof of Theorem~\ref{thm2} for $n = 3$.]
We may assume that $ 0 < a_1 \leq a_2 \leq a_3 <1$. By \eqref{absa}, $a_3 < a_2 + a_1$. Therefore, the sum of any two of the coordinates is larger than the third one, while their difference is smaller than that. Thus, \eqref{a1a2}  reads as
\[
\frac{1 - a_2^2}{a_2}\Big(\frac 1 2 + \frac{a_2 - a_1}{2 a_3}\Big) = \frac{1 - a_1^2}{a_1}\Big(\frac 1 2 - \frac{a_2 - a_1}{2 a_3}\Big)
\]
which implies that
\begin{equation}\label{a1a2_2}
a_1 + a_2 - a_3 - a_1 a_2^2 - a_1^2 a_2 - a_1 a_2 a_3 = 0
\end{equation}
if $a_1 \neq a_2$.

First, assume that all three coordinates of $\a$ are different. By swapping the role of $a_1, a_2$ and $a_3$ above,  \eqref{a1a2_2} modifies to
\begin{align*}
a_1 - a_2 + a_3 - a_1 a_3^2 - a_1^2 a_3 - a_1 a_2 a_3  &= 0\\
-a_1 + a_2 + a_3 - a_2 a_3^2 - a_2^2 a_3 - a_1 a_2 a_3 &= 0.
\end{align*}
Summing the above equations along with  \eqref{a1a2_2} results in
\[
(a_1 + a_2 + a_3) (1 - a_1 a_2 - a_1 a_3 - a_2 a_3)= 0,
\]
hence,
\[
a_1 a_2 + a_1 a_3 + a_2 a_3 = 1.
\]
Since $a_1^2 + a_2^2 + a_3^2=1$, this leads to
\begin{equation}\label{a1a2a3}
  (a_1 - a_2)^2 + (a_1 - a_3)^2 + (a_2 - a_3)^2 = 0
\end{equation}
which shows that $a_1 = a_2 = a_3 = 1/\sqrt{3}$ must hold.

Second, assume that two coordinates are equal, and the third is different from them. Without loss of generality, we may suppose that $a_2 = a_3$ (the same argument applies to the other cases as well). Then \eqref{a1a2_2} transforms to
\begin{equation}\label{a2a3eq}
  a_1(1 -  2 a_2^2 - a_1 a_2) = 0.
\end{equation}

Since $a_1^2 + 2 a_2^2 = 1$ and $a_1 >0$, this implies that $a_1 = a_2$, which contradicts our assumption.

Thus, all the critical directions are diagonal. As is well known, $(1,0,0)^\perp \cap Q_3$ is a minimal section, while $(\frac{1}{\sqrt{2}}, \frac{1}{\sqrt{2}},0)^\perp$ yields the maximum. By calculating the bordered Hessian one finds that 3-diagonal directions are saddle points (see also Figure~\ref{fig_3d}).
\end{proof}

\begin{figure}[h]
\centering
  \includegraphics[width=0.6\textwidth]{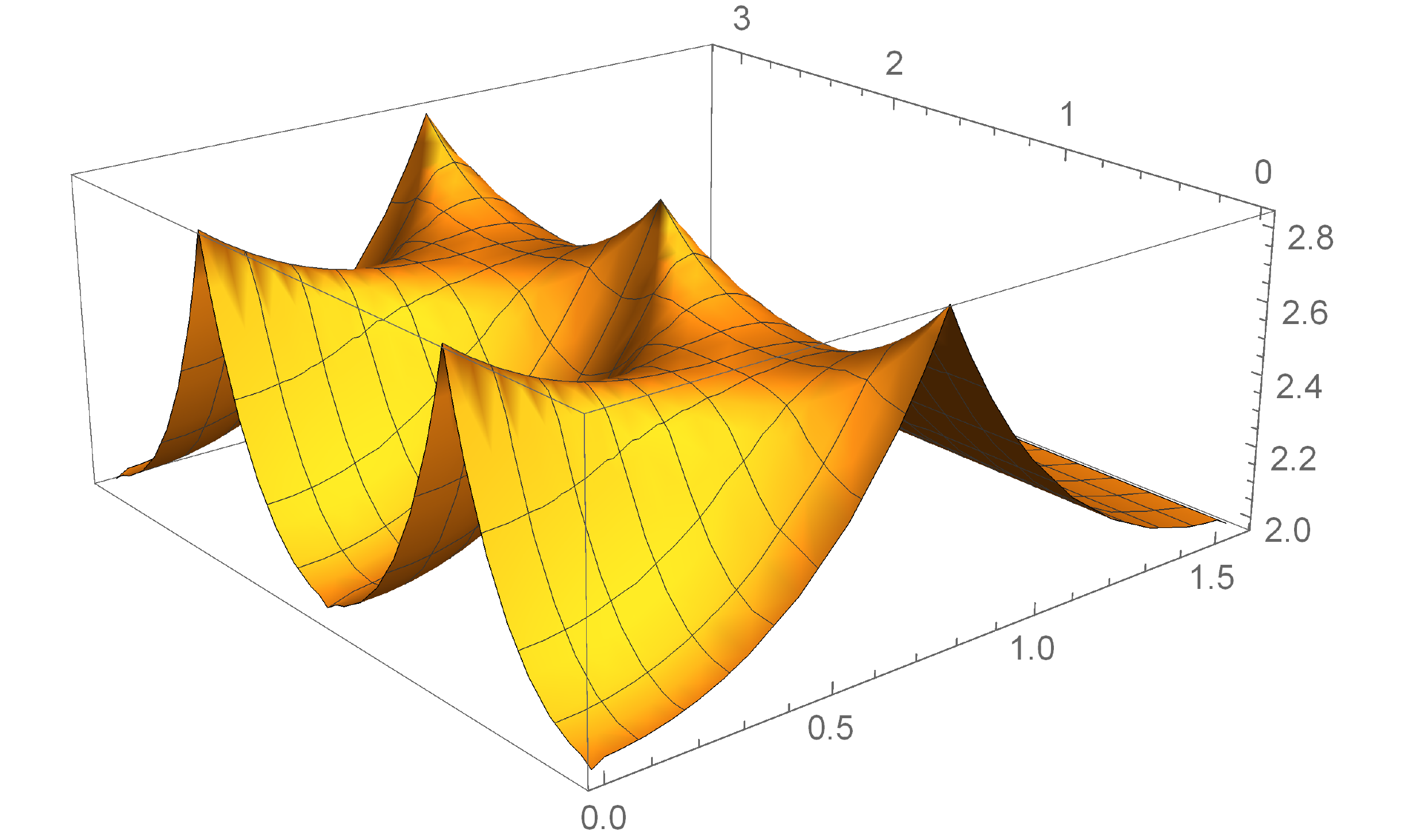}
  \caption{Plot of $\vol_2(Q_3 \cap (\sin \alpha, \cos \alpha \sin \beta, \cos \alpha \cos \beta)^\perp)$, the area of central sections of $Q_n$ for $n=3$,  with $\alpha \in [0, \pi/2]$ and $\beta \in [0, \pi]$.}\label{fig_3d}
\end{figure}

\section{Critical sections in the 4-dimensional case}

\begin{proof}[Proof of Theroem~\ref{thm3}]
Similarly to the 3-dimensional case, the argument is based on \eqref{a1a2}. Yet, the 4-dimensional case requires a longer discussion and case study. In order to simplify the subsequent arguments, we denote the coordinates with $b_1, b_2, b_3$ and $b_4$ momentarily,  which will be substituted by the coordinates of $\a$ in various order later.

Let thus $0<b_1 \leq b_2$ and $0 < b_3 \leq b_4$ with
\begin{equation}\label{sumbi}
b_1^2 + b_2^2 + b_3^2 + b_4^2= 1.
\end{equation} By comparing $b_1$ and $b_2$, \eqref{a1a2} shows that
\begin{equation}\label{b1b2}
  \frac{1 - b_2^2}{b_2} \int_{b_1 - b_2}^{b_1 + b_2} f_{b_3 X_3 + b_4 X_4}(x) \dd x = \frac{1 - b_1^2}{b_1} \int_{b_2 - b_1}^{b_2 + b_1} f_{b_3 X_3 + b_4 X_4}(x) \dd x.
\end{equation}
Note that
\begin{equation}\label{fx}
   f_{b_3 X_3 + b_4 X_4}(x) = \begin{cases}
    \frac{x + b_3 + b_4}{4 b_3 b_4} & \text{for }  - b_3 - b_4 \leq x \leq b_3 - b_4 \\
   \frac 1 {2 b_4} & \text{for } b_3 - b_4 \leq x \leq b_4 - b_3 \\
    \frac{-x + b_3 + b_4}{4 b_3 b_4} & \text{for } b_4 - b_3 \leq x \leq b_4 + b_3
  \end{cases}
\end{equation}

We will consider four cases according to the signs of $(b_1 + b_2) - (b_3 + b_4)$ and $(b_1 + b_4 ) - (b_2 + b_3)$.

\medskip
\noindent
{\bf Case A.} $b_1 + b_2 \leq b_3 + b_4$ and $b_2 + b_3 \leq b_1 + b_4$. Then, since $b_2  - b_1 \leq b_4 - b_3$, by \eqref{fx}, \eqref{b1b2} leads to
\[
(b_2 - b_1)[(b_1 + b_2 + b_3 - b_4)^2 (1+ b_1 b_2 ) - 8 b_1 b_2 b_3( b_1 + b_2) ].
\]
Therefore, either $b_1 = b_2$ holds true, or
\begin{equation}\label{caseA}
(b_1 + b_2 + b_3 - b_4)^2 (1+ b_1 b_2 ) = 8 b_1 b_2 b_3( b_1 + b_2) .
\end{equation}

\medskip
\noindent
{\bf Case B.} $b_1 + b_2 \leq b_3 + b_4$ and $b_2 + b_3 \geq b_1 + b_4$. Comparing $b_1$ and $b_2$ by \eqref{b1b2} now leads to
\begin{equation}\label{caseB}
(b_2^2 - b_1^2)(1 + b_2^2 - 2 b_2 (b_3 + b_4)) = (1 - b_2^2) ( b_3 - b_4)^2.
\end{equation}

\medskip
\noindent
{\bf Case C.} $b_1 + b_2 \geq b_3 + b_4$ and $b_2 + b_3 \leq b_1 + b_4$. In this case, \eqref{b1b2} simplifies to
\[
 b_1 (1 - b_2^2) \Big(\frac 1 2 + \frac {b_2 - b_1}{2 b_4} \Big) = b_2 (1 - b_1^2) \Big(\frac 1 2 - \frac {b_2 - b_1}{2 b_4} \Big).
\]
Thus, either $b_1 = b_2$, or
\begin{equation}\label{caseC}
  b_1 + b_2 - b_4- b_1 b_2^2 - b_1^2 b_2 - b_1 b_2 b_4 = 0.
\end{equation}

\medskip
\noindent
{\bf Case D.}  $b_1 + b_2 \geq b_3 + b_4$ and $b_2 + b_3 \geq b_1 + b_4$. Now, equation \eqref{b1b2} may be written as
\[
b_1 (1 - b_2^2) \Big(1 -  \frac {(b_1 - b_2 + b_3 + b_4)^2}{8 b_3 b_4} \Big) = b_2 (1 - b_1^2) \frac {(b_1 - b_2 + b_3 + b_4)^2}{8 b_3 b_4},
\]
which leads to
\begin{equation}\label{caseD}
  8 b_1 b_3 b_4 ( 1 - b_2^2) - (b_1 - b_2 + b_3 + b_4)^2 \big[b_1  + b_2 - b_1^2 b_2 - b_1 b_2^2  \big] =0.
\end{equation}

Let now $\a = (a_1, a_2, a_3, a_4) \in S^3$ be a critical direction with none of its coordinates being 0. We may assume that $0 <a_1 \leq a_2 \leq a_3 \leq a_4$.

We will  study two cases according the sign of $(a_1 + a_4) - ( a_2 + a_3)$.
Suppose first that $a_1 + a_4 \leq a_2 + a_3$. It is easy check that the conditions of Case C are satisfied in all the three substitutions below:
\begin{equation}\label{subs1}
(b_1, b_2 ,b_3, b_4):= (a_2, a_4, a_1, a_3) \textrm{ or } (a_3, a_4, a_1, a_2) \textrm{ or } (a_2, a_3, a_1, a_4).
\end{equation}
Applying these substitutions, we proceed  as in the proof of Theorem~\ref{thm2}.

If $a_2, a_3$ and $a_4$ are all different, then we have
\[
(a_2 - a_3)^2 + (a_2 - a_4)^2 + (a_3 - a_4)^2 = - 2 a_1^2
\]
which yields that $a_1 = 0$, a contradiction.

Assume next that two of $a_2, a_3$ and $a_4$ are equal, and the third one differs from them. The either $a_2 = a_3$ or $a_3 = a_4$. Suppose that $a_2 = a_3$, then $a_2 \neq a_4$. Applying the first substitution in $\eqref{subs1}$, equation \eqref{caseC} simplifies to
\[
a_4 ( 1 - 2 a_2^2 - a_2 a_4) =0,
\]
which by $a_4 >0$ and $1 - 2 a_2^2  = a_1^2 +  a_4^2$ implies that
\[
a_1^2 + a_4^2(a_4 - a_2) = 0,
\]
which is impossible.

Thus, we must have that $a_3 = a_4$ for all critical directions for which $a_1 + a_4 \leq a_2 + a_3$ holds. Apply the following substitution:
\[
(b_1, b_2 ,b_3, b_4):= (a_1, a_2,a_3, a_3)
\]
which belongs to Case B. Then \eqref{caseB} simplifies to
\[
(a_2^2 - a_1^2) (1 + a_2^2 - 4 a_2 a_3) =0.
\]
Therefore, either $a_1 = a_2$, or, using \eqref{sumbi},
\[
a_1^2 + 2 (a_2 - a_3)^2 = 0,
\]
which contradicts to $a_1 >0$.

Thus, $a_1 = a_2$ and $a_3 = a_4$. Taking the first substitution in \eqref{subs1} yields that either $a_1 = a_3$, in which case $\a$ is a 4-diagonal direction, or by \eqref{caseC},
\[
a_1(1 - a_1 a_3 - 2 a_3^2) =0.
\]
By \eqref{sumbi}, this is equivalent to
\[
a_1 ( 2 a_1 - a_3) =0
\]
which is only possible if $a_3 = 2 a_1$. Accordingly,
\begin{equation}\label{aeq}
  \a = \Big(\frac 1 {\sqrt{10}}, \frac 1 {\sqrt{10}}, \frac 2 {\sqrt{10}}, \frac 2 {\sqrt{10}}\Big).
\end{equation}
In this case, \eqref{a1a2} indeed holds for any pair of coordinates of $\a$, therefore, $\a$ is a critical  direction.

Second, assume that $a_1 + a_4 \geq a_2 + a_3$. In this case, conditions of Case D are satisfied for each of the following three substitutions:
\begin{equation}\label{subsb}
  (b_1, b_2 ,b_3, b_4):= (a_1, a_4,a_2, a_3)  \textrm{ or }  (a_2, a_4, a_1, a_3) \textrm{ or } (a_3, a_4, a_1, a_2).
\end{equation}
Taking the difference of \eqref{caseD} under the first two substitutions above, we obtain that
\[
(a_1 + a_2 + a_3 - a_4)^2 (a_1 - a_2) (a_4 (a_1 + a_2 + a_4) -1) = 0.
\]
Equation \eqref{absa} for $ k =4$ shows that the first term above may not be $0$. Thus, either $a_1 = a_2$, or
\begin{equation}\label{a1a2a4}
a_1 + a_2= \frac 1 {a_4} - a_4.
\end{equation}
Applying the same argument for the other two pairs of substitutions of \eqref{subsb} yields the same conclusion for $a_1$ and $a_3$, and for $a_2$ and $a_3$.

Assume first that $a_1, a_2$ and $a_3$ are all different. Then \eqref{a1a2a4} implies that $1/ a_4 - a_4 = a_1 + a_2 = a_1 + a_3$, which leads to $a_2 = a_3$, a contradiction.

Therefore, we deduce that out of the coefficients $a_1, a_2, a_3$, at least two must be equal. Let us first assume that not all three of these are the same. The subsequent argument is going to be symmetric with respect to permuting the coordinates 1,2,3, therefore we may assume that $a_1 = a_2$, and they differ from $a_3$.

By the analogue of \eqref{a1a2a4},  $(a_1 + a_3 + a_4) a_4= 1.$ By \eqref{sumbi}, we also have $2 a_1^2 + a_3^2 + a_4^2 =1$. Moreover, taking the substitution
\[
(b_1, b_2 ,b_3, b_4):= (a_1, a_3,a_1, a_4)
\]
which belongs to Case A, \eqref{caseA} implies that
\[
(2 a_1 + a_3 - a_4)^2 ( 1 + a_1 a_3) = 8 a_1^2 a_3 ( a_1 + a_3).
\]
Utilizing a computer algebra software reveals that the only positive solution to the system of polynomial equations
\begin{equation*}
    \begin{cases}
      (a_1 + a_3 + a_4) a_4= 1\\
      2 a_1^2 + a_3^2 + a_4^2 =1\\
      (2 a_1 + a_3 - a_4)^2 ( 1 + a_1 a_3) - 8 a_1^2 a_3 ( a_1 + a_3) =0
          \end{cases}
\end{equation*}
is given by $a_1 = \frac 1 {\sqrt{10}}$,  $a_3 = a_4 =  \frac 2 {\sqrt{10}}$, which yields \eqref{aeq} again.

Finally, assume that  $a_1 = a_2 = a_3$. Then by \eqref{sumbi} and \eqref{caseD} via \eqref{subsb}, $a_1$ and $a_4$ satisfies the following system of polynomial equations:
\begin{equation*}
    \begin{cases}
      3 a_1^2 + a_4^2 =1\\
      8 a_1^3 (1 - a_4^2) - (3 a_1 - a_4)^2 (a_1 + a_4) ( 1 - a_1 a_4) =0
          \end{cases}
\end{equation*}
Moreover, because of \eqref{absa} we also have $a_1 > \frac 1 {\sqrt{12}} \approx 0.2887$. Again utilizing a computer algebra software shows that among positive numbers, the above system of equations has two solutions: either $a_1 = a_4 = \frac 1 2$ which yields that $\a$ is a 4-diagonal direction, or $a_1 \approx 0.2142$ and $a_4 \approx 0.9286$ which does not satisfy the above constraint.

Thus, critical directions are either diagonal or they are of the form \eqref{aeq}, up to permutations and sign changes. It is easy to check that these indeed satisfy \eqref{a1a2} for any pair of coordinates. Out of these possibilities, 1-diagonal sections constitute global minima, 2-diagonal sections yield global maxima. As seen before, 3-diagonal directions are saddle points. Calculating the bordered Hessian at 4-diagonal directions shows that these constitue local, but not global, maxima.
\end{proof}

\section{Concluding remarks}

This piece of research stemmed from the recent result of Bartha, Fodor and Gonz\'alez Merino \cite{BFGM21} who proved that volumes of central $k$-diagonal sections of $Q_n$ form an increasing sequence for $k\geq 3$. It is natural to ask if that result can be used to determine minimal/maximal central sections of the cube. To that end, it would be sufficient to show that all directions $\a \in S^{n-1}$ which are critical with respect to $\sigma(\a)$ are diagonal. However, Theorem~\ref{thm3} shows that this is not true. Yet, in the 4-dimensional case, non-diagonal critical points constitute only saddle points. Therefore, the following question remains open: {\em Is it true that all  locally extremal central sections are diagonal?} If so, that would yield an alternate proof to the celebrated result of Ball~\cite{Ba86} via \cite{BFGM21}.

Even though it is not true that critical sections are all diagonal, it holds in the following `approximate asymptotic sense'. Assuming that the $a_i$'s are fairly equal, the Central Limit Theorem implies that $\sum_{i=3}^n a_i X_i$ is close to a standard normal variable. Let $g(x) = \frac 1 {\sqrt{2 \pi}} e^{ - x^2 /2}$ be the standard Gaussian density. Introduce the function
\[
G(r, s):= \frac{1 - r^2}{r} \int_{s - 2r}^s g(x) \dd x = \frac{1 - r^2}{2 r} ( \erf(s) - \erf(s - 2r)).
\]
Then \eqref{a1a2} reads approximately as
\[
G(a_1, a_1 + a_2) = G(a_2, a_1 + a_2).
\]
This can be shown to imply $a_1 = a_2$. Of course, this heuristic argument does not exclude non-diagonal critical directions when the distribution of $\sum_{i=3}^n a_i X_i$  differs substantially from the normal distribution.

I only became aware of the neat article of Ivanov and Tsiutsiurupa~\cite{IT21} after proving Theorem~\ref{thm1}. I find it reassuring that two entirely different approaches yield essentially the same result.

The methods used in the present paper may also be applied to other problems regarding volumes or perimeters of sections. In particular, the problem of estimating volumes of central sections of the simplex is subject to a forthcoming paper.

Finally, I would like to express my gratitude to F. Fodor, G. Ivanov, A. Koldobsky and H. König for the illuminating discussions on the topic, and to the anonymous referee for useful suggestions.

\bigskip

\noindent
{\sc Gergely Ambrus}
\smallskip

\noindent
{\em Alfréd Rényi Institute of Mathematics, Eötvös Loránd Research Network, Budapest, Hungary \\ and\\ Bolyai Institute, University of Szeged, Hungary}
\smallskip

\noindent
e-mail address: \texttt{ambrus@renyi.hu}

\end{document}